\documentclass[12pt]{amsart}

\textwidth=5.5in \textheight=8.5in
\usepackage{latexsym, amssymb, amsmath,ulem,soul,esint}

\usepackage{amsfonts, graphicx}
\usepackage{graphicx,color}
\newcommand{\be}{\begin{equation}}
\newcommand{\ee}{\end{equation}}
\newcommand{\beq}{\begin{eqnarray}}
\newcommand{\eeq}{\end{eqnarray}}

\newtheorem{thm}{Theorem}[section]
\newtheorem{conj}{Conjecture}[section]

\newtheorem{lma}{Lemma}[section]
\newtheorem{prop}{Proposition}[section]

\newtheorem{defn}{Definition}[section]

\newtheorem{claim}{Claim}[section]
\theoremstyle{remark}
\newtheorem{rem}{Remark}[section]
\numberwithin{equation}{section}

\def\be{\begin{equation}}
\def\ee{\end{equation}}
\def\bee{\begin{equation*}}
\def\eee{\end{equation*}}

\newcommand{\ddb}{\partial \ov{\partial}}

\newcommand{\vp}{\varphi}

\def\K{K\"ahler }

\def\arccot{\mathrm{arccot}}

\def\a{{\alpha}}
\def\b{{\beta}}

\def\H{\mathcal{H}}

\def\ddb{\sqrt{-1}\partial\bar\partial}

\def\Re{\mathrm{Re}}
\def\Im{\mathrm{Im}}

\begin{document}

\title{Hypercritical deformed Hermitian-Yang-Mills equation revisited}

\author{Jianchun Chu}
\address[Jianchun Chu]{School of Mathematical Sciences, Peking University, Yiheyuan Road 5, Beijing, P.R.China, 100871}
\email{jianchunchu@math.pku.edu.cn}

\author{Man-Chun Lee}
\address[Man-Chun Lee]{Department of Mathematics, The Chinese University of Hong Kong, Shatin, N.T., Hong Kong}
\email{mclee@math.cuhk.edu.hk}

\subjclass[2020]{Primary: 53C55; Secondary: 35A01.}

\date{\today}

\begin{abstract}
In this paper, we study the hypercritical deformed Hermitian-Yang-Mills equation on compact K\"ahler manifolds and resolve two conjectures of Collins-Yau \cite{CollinsYau2018}.
\end{abstract}

\keywords{Deformed Hermitian-Yang-Mills equation, Hypercritical phase}

\maketitle

\section{Introduction}

Let $(X^n,\omega)$ be a compact \K manifold and $\a$ be a closed real $(1,1)$ form on $X$ so that $\int_X (\a+\sqrt{-1}\omega)^n \neq 0$ and therefore we might write
\begin{equation}\label{theta 0}
\int_X (\a+\sqrt{-1}\omega)^n=\mathbb{R}_{>0}\cdot e^{\sqrt{-1}\theta_0}
\end{equation}
for some $e^{\sqrt{-1}\theta_0}\in \mathbb{S}^1$. In particular, the angle $\theta_0$ is well-defined modulo $2\pi$. The deformed Hermitian-Yang-Mills (dHYM) equation seeks for $\varphi\in C^\infty(X)$ such that $\a_\varphi=\a+\ddb \varphi$ satisfies
\begin{equation}\label{dHYM-equ}
\Im \left(e^{-\sqrt{-1}\theta_0} (\a_\varphi+\sqrt{-1}\omega)^n \right)=0.
\end{equation}

The dHYM equation first appeared in \cite{LeungYauZaslow2001} from the mathematical side drawing from the physics literature \cite{MMMS00} which is corresponding to the special Lagrangian equation under the setting of the Strominger-Yau-Zaslow mirror symmetry \cite{SYZ96}.

One of the main topic in the study of dHYM equation is to characterize the solvability in terms of certain algebraic conditions on the class $[\a]$.  In \cite[Conjecture 1.4]{CollinsJacobYau2020}, Collins-Jacob-Yau predicted that the existence of solution to the supercritical dHYM equation is equivalent to a stability condition in terms of holomorphic intersection numbers for any irreducible subvarieties $V\subset X$, modeled on the Nakai-Moishezon criterion,  and confirmed it for complex surfaces.  In \cite{ChuLeeTakahashi2021}, the authors and Takahashi confirmed the conjecture in the projective case building on the works of Chen \cite{Chen2021} and Song \cite{Song2020}, see also \cite{DatarPingali2021,JacobSheu2020}.

On the other hand,  motivated by the GIT (Geometric Invariant Theory) approach for special Lagrangian \cite{Thomas2001,Solomon2013},  Collins-Yau \cite{CollinsYau2018} proposed to study the dHYM equation using the space $\mathcal{H}_\omega$ of almost calibrated $(1,1)$ forms in the class $[\a]$:
\begin{equation}
\mathcal{H}_\omega=\left\{\varphi\in C^\infty(X):  \Re\left(e^{-\sqrt{-1}\theta_0}(\a_\varphi+\sqrt{-1}\omega)^n \right)>0 \right\}.
\end{equation}

The space $\mathcal{H}_\omega$ is a (possibly empty) open subset of the space of smooth, real valued functions on $X$.  By studying the geodesic and functional on $\mathcal{H}$,  Collins-Yau \cite{CollinsYau2018} discovered a number of algebraic obstruction to the dHYM solution in the hypercritical phase.  We refer interested readers to the survey article \cite{CollinsShi2020} for a comprehensive discussion.

When $\mathcal{H}_\omega\neq \emptyset$,  a maximum principle shows that
\begin{equation}
\mathcal{H}_\omega=\left\{\varphi\in C^\infty(X): |Q_\omega(\a_\varphi)-\beta|<\frac\pi2\right\}
\end{equation}
where $Q_\omega(\a_\varphi)$ is the special Lagrangian operator defined by \eqref{lag-operator} and $\beta$ is some lift of $\theta_0$ from $\mathbb{R}/2\pi\mathbb{Z}$ to $(0,n\pi)$. The lift $\beta$ is usually referred to the analytic lifted angle. To determine the non-emptiness of $\mathcal{H}_\omega$ using algebraic information of $[\a]$,  Collins-Yau \cite[Section 8]{CollinsYau2018} introduced an algebraic approach in determining the lifted angle, see Definition~\ref{Z theta vp definitions}. Particularly, using a Chern number inequality in \cite{CollinsXieYau2018}, it was shown that the algebraic lifted angle coincides with the analytic lifted angle in three dimensional whenever a supercritical dHYM solution exists.  Moreover, the following was shown.

\begin{prop}[Proposition 8.4 in \cite{CollinsYau2018}]\label{proposition-OF-CY18}
Suppose $(X^3,\omega)$ is a compact three-dimensional \K manifold and $[\a]\in H^{1,1}(X,\mathbb{R})$.  If the dHYM equation admits a solution with $\theta\in (0,\frac\pi2)$\footnote{The convention taken here is slightly different from that in \cite{CollinsYau2018}. The range of $\theta\in (0,\frac\pi2)$ is equivalent to $\hat\theta\in(\pi,\frac{3\pi}{2})$ there.} then the followings hold:
\begin{enumerate}\setlength{\itemsep}{1mm}
\item[(i)] The Chern number satisfies
$$\left(\int_X \a^3 \right)\left(\int_X \omega^3 \right)< 9 \left(\int_X \a \wedge \omega^2 \right)\left(\int_X \a^2 \wedge \omega \right),$$
in particular the algebraic lifted angle $\hat\theta_X([\a])$ is well-defined;
\item[(ii)] $\mathrm{Im}(Z_X([\a])>0$ and $\varphi_X([\a])\in (\frac\pi2,\pi)$;
\item[(iii)] For any irreducible subvariety $V\subsetneq X$,
\[
\mathrm{Im}(Z_{V}([\alpha])) > 0 ,\quad
\vp_{V}([\alpha]) > \vp_{X}([\alpha]).
\]
\end{enumerate}
\end{prop}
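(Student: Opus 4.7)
The plan is to derive everything from pointwise properties of a hypercritical dHYM solution $\a_\vp$ on the three-fold. Diagonalizing $\a_\vp$ with respect to $\omega$ at a point with eigenvalues $\lambda_1,\lambda_2,\lambda_3$, the equation becomes $\sum_{j=1}^{3}\arctan\lambda_j = \hat\theta$ with $\hat\theta\in(\pi,\tfrac{3\pi}{2})$ (the analytic lifted angle in the Collins--Yau convention from the footnote). Since each $\arctan\lambda_j<\tfrac\pi2$, any $\lambda_j\le 0$ would force the remaining two to sum to at least $\pi$, which is impossible. Hence every $\lambda_j>0$, so $\a_\vp>0$ as a $(1,1)$-form; moreover the pointwise identity
\[
(\a_\vp+\ii\omega)^3 \;=\; \Bigl(\prod_{j=1}^{3}(\lambda_j+\ii)\Bigr)\omega^3 \;=\; \rho\,e^{\ii\hat\theta}\,\omega^3,\qquad \rho := \prod_j \sqrt{1+\lambda_j^2}>0,
\]
encodes the phase completely. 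These two facts will drive the rest of the argument.

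For (i) and (ii), I would integrate the pointwise identity over $X$, obtaining
\[
\int_X(\a+\ii\omega)^3 \;=\; \int_X(\a_\vp+\ii\omega)^3 \;=\; e^{\ii\hat\theta}\int_X \rho\,\omega^3,
\]
a strictly positive real multiple of $e^{\ii\hat\theta}$. Combined with Definition~\ref{Z theta vp definitions} (and the associated sign convention placing the algebraic angle in $(0,n\pi)$), the range $\hat\theta\in(\pi,\tfrac{3\pi}{2})$ directly yields $\Im Z_X([\a])>0$ and $\vp_X([\a])\in(\tfrac\pi2,\pi)$. The Chern number inequality in (i) is the algebraic translation of this sign information: expanding
\[
\int_X(\a+\ii\omega)^3 \;=\; \Bigl(\int_X\a^3 - 3\int_X\a\wedge\omega^2\Bigr) + \ii\Bigl(3\int_X\a^2\wedge\omega - \int_X\omega^3\Bigr),
\]
the stated inequality follows from the signs of the real and imaginary parts together with the constraint on the ratio coming from $\vp_X\in(\tfrac\pi2,\pi)$, and in particular shows that the denominator in the formula for $\hat\theta_X([\a])$ cannot vanish.

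The main obstacle is (iii). For an irreducible $V\subsetneq X$ of complex dimension $k\in\{1,2\}$, I would split into cases. When $k=1$, $\Im Z_V([\a])=\int_V\omega>0$ is automatic from K\"ahlerness, and $\vp_V>\vp_X$ reduces after restricting $\a_\vp$ to $V$ to a scalar comparison using $\lambda_j>0$. When $k=2$, the pointwise positivity $\a_\vp>0$ immediately gives $\Im Z_V([\a])=2\int_V\a_\vp\wedge\omega>0$. The delicate step is the strict comparison $\vp_V>\vp_X$ in the surface case: this compares the argument of $\int_V(\a_\vp+\ii\omega)^2$ with that of $\int_X(\a_\vp+\ii\omega)^3$, and I plan to invoke the pointwise Chern number / Hodge-index-type inequality of \cite{CollinsXieYau2018} applied to the $\omega$-eigenvalues of $\a_\vp|_V$, using the strict positivity $\a_\vp>0$ as the essential input. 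Singular $V$ will be handled by passing to a resolution after verifying that $Z_V$ and $\vp_V$ are invariant under pullback. This subvariety comparison is where the hypercritical hypothesis, through the pointwise positivity of $\a_\vp$, plays its most critical role.
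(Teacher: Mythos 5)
The paper itself does not reprove this proposition; it is quoted from Collins--Yau, and in the proof of Theorem~\ref{intro-mainTHM} items (i)--(iii) are obtained by citing ``the same argument as in \cite[Proposition 8.4]{CollinsYau2018}''. Judging your proposal on its own terms, the decisive gap is in (i). Setting $a_i=\int_X\a^i\wedge\omega^{3-i}$, the sign information you extract from the solution --- that $\int_X(\a+\ii\omega)^3$ lies in the open first quadrant, i.e.\ $a_3>3a_1>0$ and $3a_2>a_0>0$, together with $a_i>0$ from $\a_\vp>0$ --- does \emph{not} imply $a_0a_3<9a_1a_2$: the quadruple $(a_0,a_1,a_2,a_3)=(1,1,1,100)$ satisfies every one of those constraints and violates the Chern number inequality. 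So (i) is not ``the algebraic translation of this sign information''; it is a genuinely global statement (it is precisely what prevents the curve $t\mapsto Z_{X,[\a]}(t)$ from passing through $0$ at some $t>1$, since $Z_{X,[\a]}(t)=0$ forces $t^2=a_3/(3a_1)=3a_2/a_0$), and its proof in \cite{CollinsXieYau2018} uses the solution beyond the phase of $\int_X(\a+\ii\omega)^3$. Relatedly, $\vp_X([\a])$ is defined through the winding angle of $Z_{X,[\a]}(t)$ over all of $[1,+\infty]$, so computing $\mathrm{Arg}(Z_{X,[\a]}(1))$ pins down $\vp_X([\a])$ only modulo $2\pi$; you must control the entire curve (this is exactly where (i) enters) before asserting $\vp_X([\a])\in(\frac\pi2,\pi)$.

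The second gap is the tool for (iii) on divisors. Positivity $\a_\vp>0$ only gives that the eigenvalues $\mu_1,\mu_2$ of $\a_\vp|_V$ are positive, hence $\arccot\mu_1+\arccot\mu_2\in(0,\pi)$; that yields $\Im Z_V([\a])>0$ but says nothing about the comparison with $\theta_0=\mathrm{Arg}\int_X(\a_\vp+\ii\omega)^3$, which is what $\vp_V([\a])>\vp_X([\a])$ amounts to (cf.\ the computation in Lemma~\ref{lma:-intersection-sub}). The correct pointwise input is the restriction property of the Lagrangian phase operator (\cite[Lemma 8.2]{CollinsJacobYau2020}, \cite[Lemma 2.3]{ChuLeeTakahashi2021}, quoted in Section 5 of this paper): if $Q_\omega(\a_\vp)=\theta_0<\frac\pi2$, then on any $p$-dimensional subvariety one has $\Im\bigl(e^{-\ii\theta_0}(\a_\vp+\ii\omega)^p\bigr)<0$ pointwise, which integrates to the strict angular comparison. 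A Hodge-index-type inequality for the eigenvalues of $\a_\vp|_V$ driven by positivity alone cannot produce this, because $\theta_0$ is not determined by positivity. Finally, a small but real convention slip: with the operator $Q_\omega=\sum\arccot\lambda_i$ used here, $(\a_\vp+\ii\omega)^3=\rho\, e^{\ii\sum_j\arccot\lambda_j}\,\omega^3$ with phase $\frac{3\pi}{2}-\hat\theta\in(0,\frac\pi2)$, not $e^{\ii\hat\theta}$ as you wrote.
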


It is conjectured that the converse should also hold, see \cite[Conjecture 8.5]{CollinsYau2018}. In this work,  we give an affirmative answer to this question.
\begin{thm}\label{main-int}
The converse of Proposition~\ref{proposition-OF-CY18} is true.
\end{thm}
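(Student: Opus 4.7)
The plan is to run a continuity method along the ray $[\alpha_s] = [\alpha] + s[\omega]$ for $s \in [0, S_0]$, where $S_0$ is chosen large enough that the hypercritical dHYM equation associated to $[\alpha_{S_0}]$ admits a smooth solution (for $s$ large, $\alpha_s/s$ is a small perturbation of the K\"ahler form $\omega$, and the dHYM equation is then a small perturbation of one trivially solved by $\varphi \equiv 0$, so existence at $s = S_0$ follows from the implicit function theorem). I would then deform the solution monotonically down to $s = 0$, tracking the corresponding lifted phase $\theta_s \in (0, \pi/2)$.

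\textbf{Preservation of the algebraic hypotheses.} The first step is to verify that conditions (i)--(iii) propagate along $\{[\alpha_s]\}_{s \in [0, S_0]}$. The Chern number inequality (i) and the lifted angle bound (ii) propagate by a direct algebraic computation showing that the relevant quantities vary monotonically in $s$. Condition (iii), the strict gap $\varphi_V > \varphi_X$ over every proper irreducible subvariety $V \subsetneq X$, is more delicate, as the gap must persist uniformly throughout the path. Since (iii) holds at $s = 0$ by hypothesis and since adding $s\omega$ acts comparably on $\varphi_V$ and $\varphi_X$, a continuity argument combined with a compactness statement for obstructing cycles should yield uniform preservation on $[0, S_0]$.

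\textbf{Openness, closedness, and the main obstacle.} Openness in $s$ is standard: in the hypercritical regime the linearization is elliptic (in fact concave), and the implicit function theorem gives local deformation of the solution. Closedness reduces, via the Evans--Krylov and second-order estimates now established for dHYM (see \cite{ChuLeeTakahashi2021}), to a uniform $C^0$ bound on the normalized potential $\varphi_s$, and this $C^0$ bound is the principal technical burden. I would establish it by contradiction: if $\mathrm{osc}_X \varphi_{s_k} \to \infty$ along $s_k \to s_* \in [0, S_0]$, then adapting the framework of Chen \cite{Chen2021} and Song \cite{Song2020} as refined in \cite{ChuLeeTakahashi2021}, one extracts a weak limit of $\varphi_{s_k}/\sup|\varphi_{s_k}|$ whose unbounded locus is concentrated on a proper analytic subvariety $V \subsetneq X$. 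Quantifying the mass concentration on $V$ and invoking the Collins--Xie--Yau Hodge-type inequality \cite{CollinsXieYau2018} to certify that $\varphi_V$ is well-defined should then force
\[
\varphi_V([\alpha_{s_*}]) \leq \varphi_X([\alpha_{s_*}]),
\]
contradicting (iii). The main hurdle I anticipate is upgrading the destabilizer extraction from the projective setting of \cite{ChuLeeTakahashi2021} to a general compact K\"ahler three-fold: the hyperplane-section induction used there is not available, so a purely pluripotential argument producing the subvariety $V$ in the K\"ahler category, likely using positivity of currents together with the special algebraic structure of the dHYM equation in dimension three, will need to be developed. With the $C^0$ bound in hand, the continuity method closes and a smooth solution to the dHYM equation at $s = 0$ is obtained.
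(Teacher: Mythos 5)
There is a genuine gap: your proposal re-derives the analytic existence machinery but omits the actual mathematical content needed here, which is entirely algebraic. The paper does not run a new continuity method; it reduces the statement to the already-established Nakai--Moishezon criterion of \cite{ChuLeeTakahashi2021} by proving three things from hypotheses (i)--(iii): first, that $\varphi_V([\a])\in(\frac{\pi}{2},\pi)$ for every proper subvariety $V$ and that $Z_V([\a])$ lies on the ray of angle $\varphi_V([\a])$; second, that $[\a]$ is a K\"ahler class (via the Demailly--P\u{a}un numerical criterion, checked case by case for $p=1,2,3$ using the angle constraints and the Chern number inequality (i)); and third, that the intersection-number stability
\begin{equation*}
\int_V \Re(\a+\sqrt{-1}\omega)^p-\cot\theta_0\cdot\Im(\a+\sqrt{-1}\omega)^p>0,
\end{equation*}
together with its strengthened form wedged against $\a^{p-k}$, holds with $\theta_0=\pi-\varphi_X([\a])\in(0,\frac{\pi}{2})$. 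None of these appear in your outline, and they are not routine: for instance the positivity of $\int_X\a\wedge\omega^2$ requires combining (i) with the sign information from (ii), and the $(p,k)=(3,2)$ case of the strengthened inequality is a nontrivial manipulation of the numbers $a_i=\int_X\a^i\wedge\omega^{3-i}$.

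The specific step in your argument that would fail is the contradiction at the end: you assert that a destabilizing subvariety $V$ extracted from a blow-up of potentials "should force $\varphi_V([\a_{s_*}])\le\varphi_X([\a_{s_*}])$." But the destabilizers produced by the Chen--Song--Chu-Lee-Takahashi framework violate intersection-number inequalities of the form displayed above, not inequalities between the slicing angles $\varphi_V$ and $\varphi_X$; passing from one to the other is exactly the content of the paper's Lemmas on the range of $\varphi_V$ and the identity $\mathrm{Arg}\bigl(\int_V(\a+\sqrt{-1}\omega)^p\bigr)=\pi-\varphi_V([\a])$, which in turn rely on knowing a priori that $\Im(Z_{V,[\a]}(t))>0$ along the whole ray $t\in[1,+\infty]$. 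Without this bridge your contradiction does not close. Moreover, the projective-versus-K\"ahler obstacle you flag as the "main hurdle" is already resolved in \cite{ChuLeeTakahashi2021}: their Corollary 1.4 holds on general compact K\"ahler manifolds provided one verifies the stability condition wedged with powers of a K\"ahler class, and the paper supplies that K\"ahler class by first proving $[\a]$ is K\"ahler and then taking $\chi=\a$. So the correct strategy is to verify the hypotheses of the existing criterion, not to rebuild it.
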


The resolution of the conjecture is based on a Nakai-Moishezon type criterion proved by the authors and Takahashi \cite{ChuLeeTakahashi2021}. The most crucial observation is to show that the assumptions (i)-(iii) indeed give rise to the K\"ahlerity of $[\a]$ and a stability in terms of intersection number of subvariety in $X$.

In \cite{CollinsYau2018},  it is also conjectured that the non-emptiness of $\mathcal{H}_{\omega}$ is equivalent to certain Nakai-Moishezon type criterion.
\begin{conj}[Conjecture 8.7 in \cite{CollinsYau2018}]\label{CY conjecture H}
The followings are equivalent:
\begin{enumerate}\setlength{\itemsep}{1mm}
\item[(A)] The space $\H_{\omega}$ is non-empty and $[\alpha]$ has hypercritical phase;
\item[(B)] For any irreducible subvariety $V\subset X$, $\Im(Z_{V,[\alpha]})>0$.
\end{enumerate}
\end{conj}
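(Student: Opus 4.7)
The plan is to prove the two implications of Conjecture~\ref{CY conjecture H} separately, combining a pointwise analysis on subvarieties with the Nakai--Moishezon solvability criterion of \cite{ChuLeeTakahashi2021}, generalizing the strategy of Theorem~\ref{main-int}.

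For (A) $\Rightarrow$ (B), I would fix $\varphi \in \mathcal{H}_\omega$. The hypercritical phase of $[\alpha]$ combined with the almost-calibrated bound $|Q_\omega(\alpha_\varphi) - \beta| < \pi/2$ forces $Q_\omega(\alpha_\varphi) > (n-2)\pi/2$ pointwise on $X$, so in particular $\alpha_\varphi$ is a K\"ahler form. For any irreducible subvariety $V \subset X$ of dimension $k$ and any smooth point $p \in V$, Cauchy interlacing of the eigenvalues of $\omega^{-1}\alpha_\varphi$ on $T_pX$ with those of $\omega|_V^{-1}\alpha_\varphi|_V$ on $T_pV$ yields
\[
Q_{\omega|_V}(\alpha_\varphi|_V) > \frac{(k-2)\pi}{2}
\]
pointwise on the smooth locus of $V$. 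This is exactly the pointwise positivity of $\Im\bigl((\alpha_\varphi + \sqrt{-1}\omega)^k\bigr)$ as a $(k,k)$-form on $V$, and integration along $V$ gives $\Im(Z_{V,[\alpha]}) > 0$.

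For the main implication (B) $\Rightarrow$ (A), I would argue by induction on $\dim V$ that (B) implies both the K\"ahlerity of $[\alpha]$ and the full set of Nakai--Moishezon intersection inequalities required by \cite{ChuLeeTakahashi2021}. The base case $\dim V = 1$ is direct, since $\Im(Z_{C,[\alpha]}) > 0$ unravels to $\int_C \alpha > 0$. Inductively, assuming K\"ahlerity of $[\alpha|_{V'}]$ for every proper irreducible $V' \subsetneq V$, the hypothesis $\Im(Z_{V,[\alpha]}) > 0$ together with the inductive positivity and a Chern-number inequality generalizing that of \cite{CollinsXieYau2018} should deliver both $\int_V \alpha^{\dim V} > 0$ (so that Demailly--P\u aun yields the K\"ahlerity of $[\alpha|_V]$) and the Nakai--Moishezon inequalities on $V$. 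Applying the solvability theorem of \cite{ChuLeeTakahashi2021} on $X$ then furnishes a dHYM solution $\varphi$; since (B) at $V = X$ places the lifted angle in the hypercritical range, the pointwise Lagrangian phase of the solution lies in $((n-2)\pi/2, n\pi/2)$, whence $\Re\bigl(e^{-\sqrt{-1}\theta_0}(\alpha_\varphi + \sqrt{-1}\omega)^n\bigr) > 0$ and $\varphi \in \mathcal{H}_\omega$.

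The principal obstacle is the inductive step in (B) $\Rightarrow$ (A): simultaneously extracting the K\"ahlerity of $[\alpha|_V]$ and the higher Nakai--Moishezon inequalities on $V$ from the single hypothesis $\Im(Z_{V,[\alpha]}) > 0$ together with the inductive hypothesis. In three dimensions this was the heart of the proof of Theorem~\ref{main-int}, driven by the Chern-number inequality of \cite{CollinsXieYau2018}; extending it to arbitrary dimension will require either a higher-dimensional analogue of that inequality, or an alternative route that exploits the K\"ahler representatives already furnished on smaller-dimensional strata by the inductive hypothesis.
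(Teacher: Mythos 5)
Your proposal sets out to prove Conjecture~\ref{CY conjecture H}, but the paper's point about this statement is that it is \emph{false}: Proposition~\ref{counterexample} exhibits, on $X=\mathrm{Bl}_{p}(\mathbb{CP}^2)$ with $[\omega]=2[H]-[E]$ and $[\alpha]=6[H]+[E]$, classes for which (B) holds while $\H_\omega=\emptyset$. Only the implication (A)$\Rightarrow$(B) is true (it is \cite[Corollary 8.6]{CollinsYau2018}, and your restriction/interlacing sketch is essentially that argument, modulo the incorrect claim that $\alpha_\varphi$ must be K\"ahler --- the bound $Q_\omega(\alpha_\varphi)<\pi$ only forbids two or more nonpositive eigenvalues, not one). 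The implication (B)$\Rightarrow$(A) cannot be proved.

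The concrete failure in your plan is the base case of the induction. For a curve $C$ ($p=1$), the definition gives
\begin{equation*}
Z_{C,[\alpha]} \;=\; \sqrt{-1}\int_{C}\bigl(\omega+\sqrt{-1}\alpha\bigr)\;=\;-\int_{C}\alpha+\sqrt{-1}\int_{C}\omega,
\end{equation*}
so $\Im(Z_{C,[\alpha]})=\int_{C}\omega$, which is automatically positive because $\omega$ is K\"ahler; it does \emph{not} ``unravel to $\int_{C}\alpha>0$''. Thus hypothesis (B) carries no information on curves, and in particular cannot force K\"ahlerity of $[\alpha]$. This is exactly the mechanism of the counterexample: there $\int_{E}\alpha=(6H+E)\cdot E=-1<0$, yet $\Im(Z_{E,[\alpha]})=\int_E\omega=1>0$ and $\Im(Z_{X,[\alpha]})=\int_X\alpha\wedge\omega=13>0$, so (B) holds; but any $\varphi\in\H_\omega$ would satisfy $\alpha_\varphi+\tan\theta_0\cdot\omega>0$ with $\tan\theta_0=\tfrac{13}{16}$, and integrating over $E$ gives $-1+\tfrac{13}{16}<0$, a contradiction. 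The correct replacement for (B), which the paper does prove characterizes $\H_\omega\neq\emptyset$ in the hypercritical range, is the stronger test-family/Nakai--Moishezon condition \eqref{sub-var-equ} of Theorem~\ref{CS question answer} (involving $\Re-\cot(\theta_0+\tfrac{\pi}{2})\,\Im$ on all subvarieties), not the single imaginary-part positivity in (B).
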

The implication $(A)\Longrightarrow (B)$ has been established in \cite[Corollary 8.6]{CollinsYau2018}.  Though an example in blow-up of $\mathbb{CP}^2$ at one point, we find that the converse is not necessarily true.
\begin{prop}\label{counterexample}
On $X=\mathrm{Bl}_{p}(\mathbb{CP}^2)$,  there exist \K class $[\omega]$ and $[\a]\in H^{1,1}(X,\mathbb{R})$ such that (B) in Conjecture~\ref{CY conjecture H} holds but $\mathcal{H}_{\omega}=\emptyset$.
\end{prop}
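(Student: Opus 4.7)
The plan is to exhibit an explicit pair of classes on $X=\mathrm{Bl}_p(\mathbb{CP}^2)$ for which (B) holds but no almost calibrated representative can exist. Let $H$ denote the pullback of the hyperplane class and $E$ the exceptional divisor, so that $H^2=1$, $H\cdot E=0$, $E^2=-1$, and the K\"ahler cone is $\{aH-bE:a>b>0\}$. I would take $[\omega]=H-\varepsilon E$ (K\"ahler for $0<\varepsilon<1$) and $[\alpha]=H-2E$; the point is that $[\alpha]$ sits just outside the K\"ahler cone, failing positivity only against the proper transform $H-E$ of a line through $p$.

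Condition (B) is verified as follows. For any irreducible curve $C\subset X$, $\Im(Z_{C,[\alpha]})$ is a positive multiple of $\int_C\omega>0$ since $[\omega]$ is K\"ahler. For $V=X$, one computes
\[
\int_X(\alpha+\sqrt{-1}\omega)^2=(\varepsilon^2-4)+2\sqrt{-1}(1-2\varepsilon),
\]
so $\Im(Z_{X,[\alpha]})$ is a positive multiple of $1-2\varepsilon>0$ whenever $\varepsilon<1/2$; the same formula shows that the lifted angle $\theta_0$ lies in $(\pi/2,\pi)$, i.e.\ $[\alpha]$ has hypercritical phase.

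To show $\mathcal{H}_\omega=\emptyset$, suppose for contradiction $\tilde\alpha=\alpha+\ddb\varphi\in\mathcal{H}_\omega$, and let $\lambda_1,\lambda_2$ be the eigenvalues of $\omega^{-1}\tilde\alpha$ with $\phi_i=\arctan\lambda_i\in(-\pi/2,\pi/2)$. Since $[\alpha]$ is hypercritical, $\tilde\alpha\in\mathcal{H}_\omega$ reads $|Q_\omega(\tilde\alpha)-\theta_0|<\pi/2$ with $\theta_0>\pi/2$, which becomes $\phi_1+\phi_2>\theta_0-\pi/2$ pointwise on $X$. Since $\phi_j<\pi/2$ strictly, this forces $\phi_i>\theta_0-\pi$ for each $i$, equivalently $\lambda_i>-\tan(\pi-\theta_0)$ pointwise. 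Writing $t:=\tan(\pi-\theta_0)>0$, we conclude that $\tilde\alpha+t\omega>0$ as a K\"ahler form, and hence the class $[\alpha]+t[\omega]$ must be K\"ahler.

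This last conclusion fails in the example. From $\tan\theta_0=\frac{2(1-2\varepsilon)}{\varepsilon^2-4}$ we obtain $t=\frac{2(1-2\varepsilon)}{4-\varepsilon^2}<1$ for every $\varepsilon\in[0,1/2)$; pairing $[\alpha]+t[\omega]=(1+t)H-(2+t\varepsilon)E$ with the effective curve class $H-E$ gives
\[
\bigl([\alpha]+t[\omega]\bigr)\cdot(H-E)=(1+t)-(2+t\varepsilon)=t(1-\varepsilon)-1<0,
\]
so $[\alpha]+t[\omega]$ is not even nef, let alone K\"ahler, contradicting the necessary condition derived above. Thus $\mathcal{H}_\omega=\emptyset$. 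The main obstacle is identifying the sharp K\"ahler constraint on $[\alpha]+t[\omega]$ from the pointwise almost-calibrated inequality; once this is in place, the intersection calculation is straightforward, and the example is simply designed so that (B) survives while the K\"ahler condition fails.
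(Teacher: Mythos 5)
The step on which your whole argument rests --- ``$\varphi\in\mathcal{H}_\omega$ forces $\tilde\alpha+t\omega>0$ for $t=\tan(\pi-\theta_0)$, hence $[\alpha]+t[\omega]$ is K\"ahler'' --- is only valid when the lifted angle is genuinely hypercritical, i.e.\ $\theta_0\in(0,\tfrac\pi2)$. With the paper's conventions $Q_\omega(\alpha_\varphi)=\sum_i\mathrm{arccot}(\lambda_i)=\pi-(\phi_1+\phi_2)$, the condition $|Q_\omega(\alpha_\varphi)-\theta_0|<\tfrac\pi2$ gives $\phi_1+\phi_2>\tfrac\pi2-\theta_0$, not $\phi_1+\phi_2>\theta_0-\tfrac\pi2$ as you wrote; for your $\theta_0\in(\tfrac\pi2,\pi)$ this lower bound is negative and yields no pointwise bound on the individual eigenvalues (equivalently: from $\mathrm{arccot}(\lambda_i)<Q_\omega(\alpha_\varphi)<\theta_0+\tfrac\pi2$ one can only conclude $\lambda_i>\cot(\theta_0+\tfrac\pi2)$ when $\theta_0+\tfrac\pi2<\pi$, and here $\theta_0+\tfrac\pi2>\pi$, so one eigenvalue may be arbitrarily negative as long as the other compensates). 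Incidentally, $\theta_0\in(\tfrac\pi2,\pi)$ is supercritical, not hypercritical, in the paper's terminology, but the real issue is that in this angular range the positivity argument simply does not run.

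The failure is not only in the proof: for your classes $[\omega]=H-\varepsilon E$, $[\alpha]=H-2E$ the space $\mathcal{H}_\omega$ is in fact non-empty, so the example cannot be repaired. Solve the linear equation $\mathrm{tr}_\omega(\alpha+\sqrt{-1}\partial\bar\partial\varphi)=c$ with $c=2\int_X\alpha\wedge\omega/\int_X\omega^2=\tfrac{2(1-2\varepsilon)}{1-\varepsilon^2}\in(0,2)$; pointwise the eigenvalues then satisfy $\lambda_1+\lambda_2=c>0$ and $\lambda_1\lambda_2\le c^2/4<1$, so $(\alpha_\varphi+\sqrt{-1}\omega)^2/\omega^2=(\lambda_1\lambda_2-1)+\sqrt{-1}\,c$ lies in the open second quadrant, as does $e^{\sqrt{-1}\theta_0}$, whence $\Re\big(e^{-\sqrt{-1}\theta_0}(\alpha_\varphi+\sqrt{-1}\omega)^2\big)>0$ and $\varphi\in\mathcal{H}_\omega$. (Your data does show that (B) can hold while (A) fails, since the phase is not hypercritical, but that is weaker than the Proposition, which asserts $\mathcal{H}_\omega=\emptyset$.) To get emptiness one must arrange the cohomological angle to land in $(0,\tfrac\pi2)$ --- i.e.\ $\int_X(\alpha+\sqrt{-1}\omega)^2$ in the first quadrant --- while $[\alpha]+\tan\theta_0\cdot[\omega]$ still pairs negatively with some curve; this is exactly what the paper's choice $[\omega]=2H-E$, $[\alpha]=6H+E$ does ($\tan\theta_0=\tfrac{13}{16}$, contradiction against $E$), and your choice, with the angle in the second quadrant, cannot.
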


In contrast, we can provide an alternative criteria of $\mathcal{H}_{\omega}\neq \emptyset$ in terms of stability condition on holomorphic intersection numbers for any irreducible subvariety $V\subset X$ based on the work in \cite{ChuLeeTakahashi2021}, see Theorem~\ref{CS question answer} and Remark~\ref{CS question answer-2}.

\medskip

The paper is organized as follows: In Section 2, we will collect some preliminaries and notations that will be used throughout this work. In Section 3, we will give the proof of Theorem~\ref{main-int}. In Section 4, we will prove Proposition~\ref{counterexample} which gives a counter-example of Conjecture~\ref{CY conjecture H}.  In Section 5, we will discuss a criteria of $\mathcal{H}_\omega\neq \emptyset$.

\medskip

{\it Acknowledgement}:
J. Chu was partially supported by Fundamental Research Funds for the Central Universities (No. 7100603624).

\section{Preliminaries and notations}
In this section, we will introduce the necessary notations in this work.  The ultimate goal is to understand the existence of solution to the dHYM equation \eqref{dHYM-equ}. Locally, if we choose a local holomorphic coordinate around $p\in X$ so that $\a_\varphi(p)$ is diagonal with respect to $\omega(p)$ with eigenvalues $\lambda_i$, then
\begin{equation}
\begin{split}
 \frac{(\a_\varphi+\sqrt{-1}\omega)^n}{\omega^n}
 &=\sqrt{\prod_{i=1}^n (1+\lambda_i^2)} \cdot e^{\sqrt{-1}\sum_{i=1}^n \arccot(\lambda_i)}.
 \end{split}
\end{equation}
In this way, we define the Lagrangian phase operator\footnote{In the literature, it is sometime convenient to consider the integral $\int_X (\omega+\sqrt{-1}\a)^n$ instead and the corresponding Lagrangian phase operator will be defined as $\hat Q_\omega(\a_{\vp})=\sum_{i=1}^n\arctan(\lambda_i)$ instead.} as
\begin{equation}\label{lag-operator}
Q_\omega(\a_\varphi)=\sum_{i=1}^n \arccot(\lambda_i).
\end{equation}
In other words, the dHYM equation seeks for $\varphi\in C^\infty(X)$ so that
\begin{equation}
Q_\omega(\a_\varphi)=\theta_0 \ \ {\mathrm{mod}} \; 2\pi.
\end{equation}
where $e^{\sqrt{-1}\theta_0}$ is a cohomological constant determined by the class $[\omega]$ and $[\a]$.

\medskip

The space of almost calibrated $(1,1)$ forms in the class $[\a]$ is given by
\begin{equation}
\mathcal{H}_\omega=\left\{\varphi\in C^\infty(X):  \Re\left(e^{-\sqrt{-1}\theta_0}(\a_\varphi+\sqrt{-1}\omega)^n \right)>0 \right\}.
\end{equation}
In general, the space $\mathcal{H}_{\omega}$ depends also on the representative $\omega$ of $[\omega]$.

Since $\theta_0$ is a-priori only defined in $\mathbb{R}/2\pi\mathbb{Z}$, $\mathcal{H}_{\omega}$ will be a disjoint union of branches. It is an application of maximum principle \cite{CollinsXieYau2018} that if $\mathcal{H}_{\omega}\neq \emptyset$, then we have
\begin{equation}
\mathcal{H}_{\omega}=\left\{\varphi\in C^\infty(X): |Q_\omega(\a_\varphi)-\beta|<\frac\pi2\right\}
\end{equation}
for an unique $\beta\in (0,n\pi)$ so that $\b=\theta_0 \; (\mathrm{mod}\; 2\pi)$. The lift $\beta$ is usually referred to the analytic lifted angle. For notational convenience, if $\mathcal{H}_{\omega}\neq\emptyset$, we will use $\theta_0$ to denote this uniquely defined lifted phase $\b$. And thus, the dHYM equation can be rewritten as
\begin{equation}
Q_\omega(\a_\varphi)=\theta_0 \in \mathbb{R}.
\end{equation}

When the lifted phase $\theta_0\in (0,\frac\pi2)$, we say that $[\a]$ has the hypercritical phase, while if $\theta_0\in (0,\pi)$, $[\a]$ is said to have supercritical phase. When the lifted phase lies inside the region of supercritical phase, the dHYM equation is known to be well-behaved in the analytic point of view. It is therefore important to determine the lifted angle. In \cite{CollinsYau2018}, Collins-Yau proposed a purely algebraic approach to determine the lift. They introduced the following.

\begin{defn}\label{Z theta vp definitions}
Let $(X,\omega)$ be a compact $n$-dimensional K\"ahler manifold. For $[\alpha]\in H^{1,1}(X,\mathbb{R})$ and $p$-dimensional irreducible subvariety $V\subset X$, define
\begin{equation*}
\left\{
\begin{array}{ll}
Z_{V,[\alpha]}(t) =\displaystyle -\int_V e^{-\sqrt{-1}(t\omega+\sqrt{-1}\a)}= -\frac{(-\sqrt{-1})^{p}}{p!}\int_{V}(t\omega+\sqrt{-1}\alpha)^{p};\\[5mm]
Z_{V}([\alpha]) = Z_{V,[\alpha]}(1)
\end{array}
\right.
\end{equation*}
for $t\in [1,+\infty]$. Suppose that $Z_{V,[\alpha]}(t)\in\mathbb{C}^{*}$ for all $t\in[1,+\infty]$.
\begin{enumerate}\setlength{\itemsep}{1mm}
\item[(i)] The algebraic lifted angle $\hat{\theta}_{V}([\alpha])$ is defined as the winding angle of the curve $Z_{V,[\alpha]}(t)$ as $t$ runs from $+\infty$ to $1$.
\item[(ii)] The slicing angle $\vp_{V}([\alpha])$ is defined as
\[
\vp_{V}([\alpha]) = \hat{\theta}_{V}([\alpha])-(p-2)\cdot\frac{\pi}{2}.
\]
\end{enumerate}
\end{defn}

\section{Proof of Theorem~\ref{intro-mainTHM}}

In this section, we will establish the characterization of existence of hypercritical dHYM solution in three dimension, namely Theorem~\ref{intro-mainTHM}. We start with some preparation lemmas.

\begin{lma}\label{lma:range-OF-varphi}
Under the assumption (i), (ii) and (iii) in Theorem~\ref{intro-mainTHM}, the following holds.
For any proper $p$-dimensional irreducible subvariety $V\subsetneq X$, we have
\begin{equation}
\frac\pi2<\varphi_X([\a])<\varphi_V([\a])<\pi.
\end{equation}
Moreover, $\displaystyle Z_{V}([\a])\in \mathbb{R}_{>0} \cdot e^{\sqrt{-1}\varphi_V([\a])}$.
\end{lma}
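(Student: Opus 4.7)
The plan is to reduce both assertions to a single observation: for every proper irreducible subvariety $V \subsetneq X$, the curve $t \mapsto Z_{V,[\a]}(t)$ lies in the open upper half-plane for every $t \in [1,+\infty]$. Once this is in hand, the winding angle $\hat\theta_V([\a])$ coincides with the change of the principal argument from $t=+\infty$ to $t=1$, which will identify $\vp_V([\a])$ with the principal argument of $Z_V([\a])$ in $(0,\pi)$. This gives the moreover clause together with the upper bound $\vp_V([\a])<\pi$, while the lower inequalities $\tfrac{\pi}{2}<\vp_X([\a])<\vp_V([\a])$ are contained verbatim in hypotheses (ii) and (iii).

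To establish the half-plane property I expand $Z_{V,[\a]}(t)$ directly from Definition~\ref{Z theta vp definitions}. Since $\dim X = 3$, a proper irreducible subvariety has dimension $p\in\{1,2\}$. When $p=1$ a direct computation gives
\[
Z_{V,[\a]}(t) = -\int_V \a + \sqrt{-1}\, t\int_V \omega,
\]
whose imaginary part $t\int_V \omega$ is strictly positive because $\omega$ is K\"ahler. When $p=2$ I obtain
\[
Z_{V,[\a]}(t) = \tfrac{1}{2}\Bigl(t^2\int_V \omega^2 - \int_V \a^2\Bigr) + \sqrt{-1}\, t\int_V \omega\wedge\a,
\]
and hypothesis (iii) at $t=1$ forces $\int_V \omega\wedge\a>0$, so the imaginary part is positive throughout $[1,+\infty]$. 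In particular the curve never meets the real axis and hence does not wind around the origin.

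With no winding, $\hat\theta_V([\a])$ equals the principal argument of $Z_V([\a])$ minus its value at $t=+\infty$. Inspecting the leading monomial of $Z_{V,[\a]}(t)$ shows that the starting argument at $t=+\infty$ is exactly $(2-p)\tfrac{\pi}{2}$ (namely $\tfrac{\pi}{2}$ for $p=1$ and $0$ for $p=2$). Substituting into the definition of the slicing angle yields
\[
\vp_V([\a]) = \hat\theta_V([\a]) - (p-2)\tfrac{\pi}{2} = \arg Z_V([\a]) \in (0,\pi),
\]
which is simultaneously the moreover assertion $Z_V([\a])\in\mathbb{R}_{>0}\cdot e^{\sqrt{-1}\vp_V([\a])}$ and the bound $\vp_V([\a])<\pi$. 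Combined with (ii)--(iii), the full chain $\tfrac{\pi}{2}<\vp_X([\a])<\vp_V([\a])<\pi$ follows. I do not anticipate any serious obstacle here; the only delicate point is to check that the initial-argument shift $(2-p)\tfrac{\pi}{2}$ at $t=+\infty$ cancels the $-(p-2)\tfrac{\pi}{2}$ appearing in the definition of $\vp_V$, so that the identification of $\vp_V$ with the principal argument holds uniformly in the dimension $p$.
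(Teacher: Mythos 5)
Your proof is correct and takes essentially the same route as the paper's: the same case analysis $p\in\{1,2\}$, the same computations showing $\Im\bigl(Z_{V,[\a]}(t)\bigr)>0$ on $[1,+\infty]$ (using (iii) to get $\int_V\a\wedge\omega>0$ when $p=2$), and the same asymptotics at $t=+\infty$ identifying $\vp_V([\a])$ with the argument of $Z_V([\a])$. The only cosmetic difference is that you carry out the identification $\vp_V([\a])=\arg Z_V([\a])\in(0,\pi)$ uniformly in $p$, whereas the paper treats the two dimensions separately and, for $p=1$, closes the argument by combining $Z_V([\a])\in\mathbb{R}_{>0}\cdot e^{\sqrt{-1}\vp_V([\a])}$ with $\Im(Z_V([\a]))>0$.
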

\begin{proof}
The first two inequalities follows from assumption (ii) and (iii). It suffices to show $\varphi_V([\a])<\pi$.  Indeed, this follows from the following simple observation.  By definition, the algebraic lifted angle $\hat\theta_V([\a])$ is given by
\begin{equation}
\lim_{t\to+\infty}\frac{Z_{V,[\a]}(1)}{Z_{V,[\a]}(t)}\in \mathbb{R}_{>0} \cdot e^{\sqrt{-1}\hat\theta_V([\a])}.
\end{equation}
Together with the fact that as $t\to +\infty$,
\begin{equation}
Z_{V,[\a]}(t)\approx e^{-\sqrt{-1}(p-2) \frac\pi2}\cdot \frac{t^p}{p!} \int_V \omega^p,
\end{equation}
this shows that
\begin{equation}\label{Z varphi}
Z_{V}([\a]) = Z_{V,[\a]}(1) \in \mathbb{R}_{>0} \cdot e^{\sqrt{-1}\varphi_V([\a])}.
\end{equation}

When $p=1$,
\begin{equation}
Z_{V,[\alpha]}(t) = -\int_{V}\alpha+\sqrt{-1}t\int_{V}\omega.
\end{equation}
For $t\in[1,+\infty]$, it is clear that
\begin{equation}
\mathrm{Im}(Z_{V,[\alpha]}(t)) > 0.
\end{equation}
This implies $\hat{\theta}_{V}([\alpha])\in(0,\pi)$ and
\begin{equation}
\vp_{V}([\alpha]) = \hat{\theta}_{V}([\alpha]) + \frac{\pi}{2} \in \left(\frac{\pi}{2},\frac{3\pi}{2}\right).
\end{equation}
Combining this with \eqref{Z varphi} and $\Im(Z_{V,[\alpha]}(1))=\int_{V}\omega>0$, we see that $\vp_{V}([\alpha])<\pi$.

When $p=2$,
\begin{equation}
Z_{V,[\alpha]}(t) = \frac{1}{2}\int_{V}(t^{2}\omega^{2}-\alpha^{2})+\sqrt{-1}t\int_{V}\alpha\wedge\omega.
\end{equation}
By assumption (iii), we obtain $\int_{V}\alpha\wedge\omega>0$, and then for $t\in[1,+\infty]$,
\begin{equation}
\mathrm{Im}(Z_{V,[\alpha]}(t)) > 0.
\end{equation}
This implies $\hat{\theta}_{V}([\alpha])\in(0,\pi)$ and $\vp_{V}([\alpha]) = \hat{\theta}_{V}([\alpha])<\pi$.
\end{proof}

Next, we wish to show that $[\a]\in H^{1,1}(X,\mathbb{R})$ is in fact a \K class.  This is analogous to the K\"ahlerity of $[\a]$ if it is a sub-solution in the hypercritical phase.
\begin{lma}\label{lma-Kahlerclass}
Under the assumption (i), (ii) and (iii) in Theorem~\ref{intro-mainTHM}, $[\a]\in H^{1,1}(X,\mathbb{R})$ is a \K class.
\end{lma}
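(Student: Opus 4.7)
The plan is to verify the Demailly-Paun numerical criterion along the ray $\alpha_s := \alpha + s\omega$, $s \in [0,\infty)$. Since $[\alpha_s]$ is K\"ahler for $s \gg 0$, it suffices to show that $[\alpha_s]$ lies in the open cone
\[
\mathcal{P}(X) = \{[\beta] \in H^{1,1}(X,\mathbb{R}) : \int_V \beta^{\dim V} > 0 \text{ for every irreducible analytic } V \subset X\}
\]
for every $s \geq 0$; then the ray is a connected path in $\mathcal{P}(X)$ that meets the K\"ahler cone, and the Demailly-Paun theorem forces $[\alpha] = [\alpha_0]$ to be K\"ahler.

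For a proper irreducible subvariety $V \subsetneq X$, Lemma~\ref{lma:range-OF-varphi} gives $Z_V([\alpha]) \in \mathbb{R}_{>0}\cdot e^{\sqrt{-1}\varphi_V([\alpha])}$ with $\varphi_V([\alpha]) \in (\pi/2,\pi)$, hence $\Re(Z_V([\alpha])) < 0$ and $\Im(Z_V([\alpha])) > 0$. Unpacking these for $\dim V = 1$ yields $\int_C \alpha > 0$ for every irreducible curve $C \subsetneq X$; for $\dim V = 2$ they yield $\int_S \alpha\wedge\omega > 0$ and $\int_S \alpha^2 > \int_S \omega^2 > 0$ for every proper irreducible surface $S$.

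The crux is to establish positivity of $A := \int_X\alpha^3$, $B := \int_X\alpha^2\wedge\omega$ and $C := \int_X\alpha\wedge\omega^2$ using (i) and (ii); write $D := \int_X\omega^3 > 0$. From $\Im(Z_X([\alpha]))>0$ in (ii) one gets $3B > D$, so $B > 0$; from $\varphi_X([\alpha])\in(\pi/2,\pi)$ one gets $\Re(Z_X([\alpha]))<0$, i.e.\ $A > 3C$; and (i) reads $AD < 9BC$. Multiplying $A > 3C$ by $D > 0$ and comparing with (i) gives $3CD < AD < 9BC$, hence $3C(3B-D) > 0$; since $3B-D > 0$ this forces $C > 0$, and then $A > 3C > 0$. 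This algebraic observation is the main obstacle. With $A,B,C,D$ all positive, expanding $\int_X(\alpha+s\omega)^3 = A + 3sB + 3s^2C + s^3D$, and similarly $\int_V(\alpha + s\omega)^{\dim V}$ for proper $V$, shows $[\alpha_s] \in \mathcal{P}(X)$ for every $s \geq 0$, completing the argument.
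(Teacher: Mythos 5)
Your proof is correct and follows essentially the same route as the paper: the paper directly invokes the mixed-intersection-number criterion of Demailly--P\u{a}un (their Theorem 4.2) and verifies $\int_V\alpha^k\wedge\omega^{p-k}>0$ for all $V$ and $k$, which is exactly the set of positivity statements you establish (including the identical algebraic manipulation of $AD<9BC$, $A>3C$, $3B>D$ to force $C>0$). Your ray argument $\alpha_s=\alpha+s\omega$ combined with the connected-component form of Demailly--P\u{a}un is just an inlined proof of that same criterion, so the two arguments coincide in substance.
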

\begin{proof}
By \cite[Theorem 4.2]{DemaillyPaun2004}, it suffices to show that for any $p$-dimensional irreducible subvariety $V\subset X$ and $k=1,2,\ldots,p$, we have
\begin{equation}
\int_V \a^k\wedge \omega^{p-k}>0.
\end{equation}

When $p=1$,   Lemma~\ref{lma:range-OF-varphi} implies that $\varphi_V([\a])\in (\frac\pi2,\pi)$ and hence $\Re\left( Z_{V}([\a])\right)<0$.  Since
\begin{equation}
 Z_{V}([\a])=\sqrt{-1}\cdot \left(\int_V \omega+\sqrt{-1}\a \right),
\end{equation}
this gives $\displaystyle\int_V \a >0$.

\smallskip

When $p=2$,
\begin{equation}
\begin{split}
2\cdot  Z_{V}([\a])&=   \int_V (\omega+\sqrt{-1}\a)^2\\
 &= \left(\int_V \omega^2-\a^2 +\sqrt{-1}\int_V 2\a\wedge \omega \right).
\end{split}
\end{equation}
Hence,  Lemma~\ref{lma:range-OF-varphi} implies
\begin{equation}\label{Kahler-equ-1}
\int_V \a\wedge \omega >0 \quad\text{and}\quad  \int_V \a^2>\int_V \omega^2>0.
\end{equation}

\smallskip

When $p=3$,  $V=X$ and hence
\begin{equation}
\begin{split}
6\cdot Z_X([\a])&=-\sqrt{-1} \int_X (\omega+\sqrt{-1}\a)^3\\
&=-\sqrt{-1} \left(\int_X \omega^3-3\a^2\wedge\omega+ \sqrt{-1}\int_X3\a\wedge\omega^2-\a^3 \right)\\
&=\left(\int_X 3\a\wedge\omega^2-\a^3 \right)+\sqrt{-1}\left(\int_X 3\a^{2}\wedge\omega-\omega^3 \right).
\end{split}
\end{equation}
Since $\varphi_X([\a])\in (\frac\pi2,\pi)$,  we have
\begin{equation}\label{Kahler-equ-3}
\int_X 3\a\wedge\omega^2  < \int_X \a^3 \quad\text{and}\quad \int_X 3\a^2\wedge\omega > \int_X \omega^3>0.
\end{equation}
Therefore, it remains to show that $\int_X \omega^2\wedge \a>0$. Using the assumption (i) on the Chern number,
\begin{equation}
\begin{split}
3\left(\int_X\a\wedge  \omega^2 \right) \left(\int_X \omega^3\right)
&<\left(\int_X \a^3 \right) \left(\int_X \omega^3\right) \\
&<9 \left(\int_X \a \wedge \omega^2 \right)\left(\int_X \a^2 \wedge \omega \right).
\end{split}
\end{equation}
The integral $\int_X \a \wedge \omega^2$ is clearly non-zero. If it is negative, we will have
\begin{equation}
\int_X \omega^3 > \int_X 3\a^2\wedge\omega > \int_X \omega^3,
\end{equation}
which is impossible. In conclusion, we have
\begin{equation}
\int_X \a\wedge \omega^2, \quad \int_X \a^2 \wedge \omega,\quad \int_X \a^3>0.
\end{equation}
This completes the proof.
\end{proof}

Next, we show that the class $[\a]$ will satisfy a kind of intersection number. This is in the same spirit as the numerical criterion of the \K class proved by Demailly-P\u{a}un \cite{DemaillyPaun2004}.
\begin{lma}\label{lma:-intersection-sub}Under the assumption (i), (ii) and (iii) in Theorem~\ref{intro-mainTHM}, the following holds.
There is $\theta_0\in (0,\frac\pi2)$ such that
\begin{equation}
\int_X \Re \left(\a+\sqrt{-1}\omega \right)^n -\cot\theta_0\cdot  \Im \left(\a+\sqrt{-1}\omega \right)^n=0.
\end{equation}
And  for all $p$-dimensional irreducible subvariety $V\subsetneq X$, we have
\begin{equation}
\int_V \Re \left(\a+\sqrt{-1}\omega \right)^p -\cot\theta_0\cdot  \Im \left(\a+\sqrt{-1}\omega \right)^p>0.
\end{equation}
\end{lma}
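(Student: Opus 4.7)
The plan is to produce $\theta_0$ explicitly: set $\theta_0 := \pi - \varphi_X([\a])$, which lies in $(0,\pi/2)$ by assumption (ii). The strategy is then to convert both integrals in the statement into the algebraic language of Definition~\ref{Z theta vp definitions} via the identity $\a + \sqrt{-1}\omega = \sqrt{-1}\,\overline{\omega + \sqrt{-1}\a}$ (valid because $\omega$ and $\a$ are real). Unwinding Definition~\ref{Z theta vp definitions} and keeping track of powers of $\sqrt{-1}$ yields, on any $p$-dimensional subvariety $V$,
\begin{equation*}
\int_V (\a+\sqrt{-1}\omega)^p \;=\; (\sqrt{-1})^p\, \overline{\int_V (\omega+\sqrt{-1}\a)^p} \;=\; -p!\,\overline{Z_V([\a])}.
\end{equation*}
In particular, $\Re \int_V(\a+\sqrt{-1}\omega)^p = -p!\,\Re Z_V([\a])$ and $\Im \int_V(\a+\sqrt{-1}\omega)^p = p!\,\Im Z_V([\a])$.

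Next, I would invoke Lemma~\ref{lma:range-OF-varphi} to write $Z_V([\a]) = r_V\,e^{\sqrt{-1}\varphi_V([\a])}$ with $r_V > 0$ and $\varphi_V([\a]) \in (\pi/2,\pi)$. A one-line application of the sine addition formula then collapses the quantity appearing in the lemma into
\begin{equation*}
\int_V \Re(\a+\sqrt{-1}\omega)^p - \cot\theta_0 \cdot \int_V \Im(\a+\sqrt{-1}\omega)^p \;=\; -\frac{p!\,r_V}{\sin\theta_0}\,\sin\!\bigl(\theta_0 + \varphi_V([\a])\bigr).
\end{equation*}

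To finish, I would check signs. For $V = X$ (so $p = n = 3$), the choice $\theta_0 = \pi - \varphi_X([\a])$ forces $\theta_0 + \varphi_X([\a]) = \pi$, killing the right-hand side. For a proper subvariety $V \subsetneq X$, Lemma~\ref{lma:range-OF-varphi} yields $\varphi_X([\a]) < \varphi_V([\a]) < \pi$, so $\pi < \theta_0 + \varphi_V([\a]) < 3\pi/2$; hence $\sin(\theta_0 + \varphi_V([\a])) < 0$ and the expression is strictly positive. The only point that demands any attention is the algebraic identification of $\int_V (\a + \sqrt{-1}\omega)^p$ with a multiple of $\overline{Z_V([\a])}$; the rest of the argument just repackages information already produced by Lemma~\ref{lma:range-OF-varphi}, so I do not foresee any real obstacle.
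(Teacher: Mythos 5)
Your proposal is correct and follows essentially the same route as the paper: both identify $\theta_0=\pi-\varphi_X([\a])$, use the conjugation identity $\int_V(\a+\sqrt{-1}\omega)^p=-p!\,\overline{Z_V([\a])}$, and conclude from the strict monotonicity $\varphi_X([\a])<\varphi_V([\a])<\pi$ of Lemma~\ref{lma:range-OF-varphi} (the paper phrases the last step as the argument bound $0<\mathrm{Arg}\int_V(\a+\sqrt{-1}\omega)^p<\theta_0$ rather than your explicit sine-addition computation, which is the same thing). The only cosmetic caveat is that you implicitly use $Z_X([\a])\in\mathbb{R}_{>0}e^{\sqrt{-1}\varphi_X([\a])}$ for $V=X$ itself, which Lemma~\ref{lma:range-OF-varphi} states only for proper subvarieties but whose proof of \eqref{Z varphi} applies verbatim to $X$ under assumption (i).
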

\begin{proof}
Clearly, $\theta_0$ is determined by the class of $\omega$ and $\a$.  We first show that $\theta_0$ is in the desired range.   Direct computation and the computation in the proof of Lemma~\ref{lma-Kahlerclass} shows that
\begin{equation}\label{Kahler-equ-4}
\left\{
\begin{array}{ll}
\displaystyle\int_X \Re(\a+\sqrt{-1}\omega)^3 = \int_X \a^3-3 \a \wedge \omega^2>0; \\[5mm]
\displaystyle\int_X \Im(\a+\sqrt{-1}\omega)^3 = \int_X 3\a^2\wedge \omega- \omega^3>0.
\end{array}
\right.
\end{equation}
If $\theta_0\in (0,2\pi)$ is chosen so that
\begin{equation}
\int_X \Re(\a+\sqrt{-1}\omega)^3  = \cot \theta_0 \cdot \int_X \Im(\a+\sqrt{-1}\omega)^3,
\end{equation}
then assumption (ii) forces $\theta_0\in (0,\frac\pi2)$. This proves the first assertion.

It remains to consider the integral on the irreducible subvariety $V\subsetneq X$.  We first  relate $Z_{V}([\a])$ with $\mathrm{Arg}\left(\int_V (\a+\sqrt{-1}\omega)^p\right)$. For any $p$-dimensional irreducible subvariety $V\subset X$, by using $\varphi_X([\a])<\varphi_V([\a])$,
\begin{equation}
\begin{split}
p!\cdot Z_{V}([\a])&=-(-\sqrt{-1})^p \cdot \int_V (\omega+\sqrt{-1}\a)^p\\
&=- \int_V (\a-\sqrt{-1}\omega)^p\\
&=e^{\sqrt{-1}\pi}\cdot \overline{\int_V (\a+\sqrt{-1}\omega)^{p}}.
\end{split}
\end{equation}
Since $\varphi_V([\a])\in (\frac\pi2,\pi)$ by Lemma~\ref{lma:range-OF-varphi},
\begin{equation}
\mathrm{Arg}\left(\int_V (\a+\sqrt{-1}\omega)^p\right)=\pi-\varphi_V([\a]).
\end{equation}
In particular,  $\theta_0=\pi-\varphi_X([\a])$ and therefore for any irreducible subvariety $V\subsetneq X$,
\begin{equation}\label{Kahler-equ-2}
0<\mathrm{Arg}\left(\int_V (\a+\sqrt{-1}\omega)^p\right)<\theta_0<\frac\pi2.
\end{equation}
We complete the proof.
\end{proof}

We remark here that if \cite[Conjecture 1.4]{CollinsJacobYau2020} holds, then the main result will follow from Lemma~\ref{lma:-intersection-sub}.  Now we are ready to prove the main theorem.

\begin{thm}\label{intro-mainTHM}
Suppose $(X^3,\omega)$ is a compact three-dimensional \K manifold and $[\a]\in H^{1,1}(X,\mathbb{R})$. Then the dHYM equation admits a solution with $\theta\in (0,\frac\pi2)$ if and only if the followings hold:
\begin{enumerate}\setlength{\itemsep}{1mm}
\item[(i)] The Chern number satisfies
$$\left(\int_X \a^3 \right)\left(\int_X \omega^3 \right)< 9 \left(\int_X \a \wedge \omega^2 \right)\left(\int_X \a^2 \wedge \omega \right),$$
in particular the algebraic lifted angle $\hat\theta_X([\a])$ is well-defined;
\item[(ii)] $\mathrm{Im}(Z_X([\a])>0$ and $\varphi_X([\a])\in (\frac\pi2,\pi)$;
\item[(iii)] For any irreducible subvariety $V\subsetneq X$,
\[
\mathrm{Im}(Z_{V}([\alpha])) > 0 ,\quad
\vp_{V}([\alpha]) > \vp_{X}([\alpha]).
\]
\end{enumerate}
\end{thm}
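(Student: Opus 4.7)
The forward direction (dHYM solvability implies (i)--(iii)) is exactly Proposition~\ref{proposition-OF-CY18}, so the work lies entirely in the converse. Assume (i), (ii), (iii). My plan is to combine the three preparation lemmas above with the Nakai--Moishezon type existence theorem of \cite{ChuLeeTakahashi2021}. The conceptual picture is that (i)--(iii) already encode precisely the algebraic input needed to feed into that criterion, and Lemmas~\ref{lma:range-OF-varphi}--\ref{lma:-intersection-sub} translate the $Z_V$-language of Collins--Yau into the $(\Re,\Im)$-form of \cite{ChuLeeTakahashi2021}.

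First, I would record that Lemma~\ref{lma-Kahlerclass} already promotes $[\a]$ to a genuine \K class, so we are in the setting where the class $[\a]$ itself defines a \K metric and the supercritical dHYM theory applies to $(X,\omega,\a)$. Next, Lemma~\ref{lma:-intersection-sub} produces the lifted phase $\theta_0\in(0,\tfrac\pi2)$ together with the cohomological balancing identity on $X$ and the strict positivity
\begin{equation*}
\int_V \Re(\a+\sqrt{-1}\omega)^p -\cot\theta_0\cdot \Im(\a+\sqrt{-1}\omega)^p>0
\end{equation*}
for every proper irreducible subvariety $V\subsetneq X$. This is exactly the Nakai--Moishezon type stability condition for the supercritical dHYM equation with phase $\theta_0$, since $\cot\theta_0>0$ and $\theta_0$ is hypercritical, hence a fortiori supercritical.

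The final step is to invoke the main theorem of \cite{ChuLeeTakahashi2021}: on a compact \K manifold, if $[\a]$ is a \K class and the above Nakai--Moishezon inequality holds for every proper irreducible subvariety at some supercritical phase $\theta_0$, then the dHYM equation with phase $\theta_0$ admits a smooth solution $\varphi\in C^\infty(X)$. Because $\theta_0\in(0,\tfrac\pi2)$ from Lemma~\ref{lma:-intersection-sub}, this solution automatically has hypercritical phase, which is what the theorem demands.

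The main obstacle I anticipate is matching conventions. The existence result of \cite{ChuLeeTakahashi2021} is stated for the supercritical range $\theta\in(0,\pi)$ and for \K classes on projective manifolds; one has to verify carefully that (a) the phase produced in Lemma~\ref{lma:-intersection-sub} is the same phase used in the statement of that theorem (not shifted by some multiple of $\pi/2$ coming from the $\arccot$ versus $\arctan$ convention already flagged in the footnote of Section~2), and (b) since $X$ is a three-dimensional \K manifold admitting a \K class $[\a]$ with the Chern inequality of (i), the projective hypothesis is either inessential in dimension three or can be arranged by a small perturbation of $[\omega]$ (say, replacing $\omega$ by $\omega+\e\omega'$ for a rational \K class $\omega'$ and passing to the limit as $\e\to 0$, which is permissible because all the hypotheses (i)--(iii) and the strict Nakai--Moishezon inequalities are open in the \K class $[\omega]$). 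Once this compatibility is checked, the theorem follows immediately.
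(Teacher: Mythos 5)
Your reduction of the converse to Lemmas~\ref{lma:range-OF-varphi}--\ref{lma:-intersection-sub} matches the paper, and the forward direction is handled the same way. But the final step has a genuine gap. The existence criterion of \cite{ChuLeeTakahashi2021} that applies on a general (not necessarily projective) compact K\"ahler manifold --- Corollary~1.4 there, which is what the paper invokes --- does \emph{not} take as input only the top-degree inequality $\int_V \Re(\a+\sqrt{-1}\omega)^p-\cot\theta_0\,\Im(\a+\sqrt{-1}\omega)^p>0$ produced by Lemma~\ref{lma:-intersection-sub}. It requires the full family of mixed inequalities
\begin{equation*}
\int_{V}\left(\Re(\alpha+\sqrt{-1}\omega)^{p-m}-\cot\theta_0\cdot\Im(\alpha+\sqrt{-1}\omega)^{p-m}\right)\wedge \chi^m \ge 0 \ (\text{resp.}>0)
\end{equation*}
for a K\"ahler class $\chi$ and all $0\le m\le p$ (see Remark~\ref{CS question answer-2}). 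The technical heart of the paper's converse is precisely a Claim verifying these extra cases $(p,k)=(2,1),(3,2),(3,1)$ with $\chi=[\a]$, and this is where the Chern number inequality (i) is used \emph{quantitatively} (the algebraic manipulations with $a_0a_3<9a_1a_2$), not merely to define the lifted angle. Your proposal skips this entirely by asserting that the single top-degree inequality ``is exactly'' the needed stability condition; that is only true in the projective case.

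Your fallback (b) for the projectivity issue does not work: projectivity is a property of the manifold $X$ (existence of an integral K\"ahler class), and no perturbation of $[\omega]$ inside the K\"ahler cone can make a non-projective K\"ahler threefold (e.g.\ a generic $3$-torus) projective. So on a general K\"ahler $X^3$ you cannot reduce to the weaker, projective-case criterion, and you must supply the mixed inequalities directly --- which is the content of the paper's Claim and the step your argument is missing.
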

\begin{proof}
We begin by noting that if there exists a dHYM solution with lifted angle $\theta_0\in (0,\frac\pi2)$, then
\begin{equation}
\sum_{i=1}^3 \arctan\lambda_i =\hat\theta_0\in \left(\pi,\frac{3\pi}{2} \right).
\end{equation}
Then (i)-(iii) follows from the same argument as in \cite[Proposition 8.4]{CollinsYau2018}. In \cite{CollinsYau2018}, $[\a]$ is assumed to be $c_1(L)$ for some line bundle $L$. It is clear from the proof that $[\a]\in H^{1,1}(X,\mathbb{R})$ suffices, see also \cite{CollinsXieYau2018}.

It remains to prove the existence of dHYM solution under assumption (i)-(iii).  We fix $\theta_0\in (0,\frac\pi2)$ from Lemma~\ref{lma:-intersection-sub}.
\begin{claim}
For any $k=1,2,3$, we have
\begin{equation}
\int_X \left( \Re(\a+\sqrt{-1}\omega )^k-\cot\theta_0 \cdot \Im(\a+\sqrt{-1}\omega)^k \right)\wedge \a^{3-k}\geq 0;
\end{equation}
and for any $p$-dimensional irreducible subvariety $V\subsetneq X$ and $k=1,2,\ldots,p$,
\begin{equation}
\int_V \left( \Re(\a+\sqrt{-1}\omega )^k-\cot\theta_0 \cdot \Im(\a+\sqrt{-1}\omega)^k \right)\wedge \a^{p-k}> 0.
\end{equation}
\end{claim}

\begin{proof}[Proof of Claim]

By Lemma~\ref{lma:-intersection-sub}, it remains to consider the following cases: $(p,k)=(2,1)$, $(3,2)$ and $(3,1)$.

\smallskip

When $(p,k)=(2,1)$, we have from \eqref{Kahler-equ-2} and $[\a]>0$ that
\begin{equation}
\begin{split}
\int_V \a^2 -\omega^2 & = \cot\left(\mathrm{Arg} \Big(\int_V (\a+\sqrt{-1}\omega)^2 \Big)\right) \cdot 2\int_V \a\wedge \omega\\
&> \cot\theta_0 \cdot 2\int_V \a\wedge \omega.
\end{split}
\end{equation}
Therefore,
\begin{equation}
\begin{split}
&\quad \int_V \left[\Re(\a+\sqrt{-1}\omega) -\cot \theta_0 \cdot \Im(\a+\sqrt{-1}\omega)\right]\wedge \a\\
&=\int_V \a^2 -\cot\theta_0 \cdot \a\wedge \omega>\int_V \omega^2 +\cot\theta_0 \cdot \a\wedge \omega>0.
\end{split}
\end{equation}

\smallskip

We proceed to consider $p=3$. For notational convenience, we denote
\begin{equation}
a_i=\int_X \a^i\wedge \omega^{3-i},\quad\text{for}\; i=0,1,2,3.
\end{equation}
Then the assumption (i), $\theta_0\in (0,\frac\pi2)$ and K\"ahlerity of $[\a]$ can be reduced to
\begin{equation}
\left\{
\begin{array}{ll}
a_0a_3 < 9 a_1 a_2;\\[3mm]
0<3a_1< a_3;\\[3mm]
0<3a_2>a_0;\\[2mm]
\cot\theta_0= \displaystyle\frac{a_3-3a_1}{3a_2-a_0}\in \mathbb{R}_{>0}.
\end{array}
\right.
\end{equation}
If $k=1$,
\begin{equation}
\begin{split}
&\quad \int_X \left(\Re(\a+\sqrt{-1}\omega)-\cot\theta_0 \cdot \Im (\a+\sqrt{-1}\omega) \right)\wedge \a^2\\
&=a_3 -\frac{a_3-3a_1}{3a_2-a_0}\cdot a_2\\[2mm]
&=\frac{2a_2 a_3-a_3a_0+3a_1a_2}{3a_2-a_0}\\[1mm]
&>\frac{2a_2 (a_3-3a_1)}{3a_2-a_0}>0.
\end{split}
\end{equation}
If $k=2$,
\begin{equation}
\begin{split}
&\quad \int_X \left(\Re(\a+\sqrt{-1}\omega)^2-\cot\theta_0 \cdot \Im (\a+\sqrt{-1}\omega)^2 \right)\wedge \a\\
&=\int_X \left[ (\a^2 -\omega^2) -\cot\theta_0 \cdot (2\a\wedge \omega) \right]\wedge \a\\
&=(a_3-a_1)-\left(\frac{a_3-3a_1}{3a_2-a_0}\right)\cdot 2a_2\\[2mm]
&=\frac{3a_1a_2+a_2a_3+a_1a_0-a_0a_3}{3a_2-a_0}\\[1mm]
&>\frac{1}{3a_2-a_0}\left(-\frac23 a_0a_3 +a_1a_0+ \frac{a_0a_3^2}{9a_1} \right)\\
&=\frac{a_0 a_1}{3a_2-a_0} \left(\frac{a_3}{3a_1}-1\right)^2\geq 0.
\end{split}
\end{equation}
\end{proof}

Since $[\a]$ is a \K class by Lemma~\ref{lma-Kahlerclass}, the existence of dHYM solution with hypercritical phase follows from the Claim and \cite[Corollary 1.4]{ChuLeeTakahashi2021}. This completes the proof.
\end{proof}

\section{Counter-example on Blow-up of $\mathbb{CP}^2$}
In this section,  we will prove Proposition \ref{counterexample}. Let $X$ be the blow-up of $\mathbb{CP}^{2}$ at one point, $H$ be the pull-back of the hyperplane divisor, and $E$ be the exceptional divisor. It is well-known that
\begin{equation}
H^{2} = 1, \quad
E^{2} = -1, \quad
H \cdot E = 0,
\end{equation}
and $a[H]-[E]$ is K\"ahler when $a>1$. Now we choose
\begin{equation}
[\omega] = 2[H]-[E], \quad
[\alpha] = 6[H]+[E].
\end{equation}

\smallskip

\begin{proof}[Proof of Proposition \ref{counterexample}]
By direct calculation,
\begin{equation}\label{counterexample eqn 1}
\int_{X}(\alpha+\sqrt{-1}\omega)^{2}
= \int_{X}(\alpha^{2}-\omega^{2})+2\sqrt{-1}\int_{X}\alpha\wedge\omega
=  32+26\sqrt{-1}.
\end{equation}
Then the complex number $\int_{X}(\alpha+\sqrt{-1}\omega)^{2}$ lies in the first quadrant of $\mathbb{C}$. For any $1$-dimensional irreducible subvariety $V\subset X$,
\begin{equation}
Z_{V}([\alpha]) = -\int_{V}\alpha+\sqrt{-1}\int_{V}\omega,
\end{equation}
and hence $\Im(Z_{V}([\alpha]))>0$. When $V=X$,
\begin{equation}
Z_{X}([\alpha]) = -\frac{1}{2}\int_{V}(\alpha^{2}-\omega^{2})+\sqrt{-1}\int_{X}\alpha\wedge\omega
= -16+13\sqrt{-1}.
\end{equation}
Now we show that $\H_{\omega}$ is empty. If $\H_{\omega}\neq\emptyset$, then by $\dim X=2$, there exists $\theta_{0}\in(0,n\pi)=(0,2\pi)$ such that
\begin{equation}
\int_{X}(\alpha+\sqrt{-1}\omega)^{2} \in \mathbb{R}_{>0} \cdot e^{\sqrt{-1}\theta_{0}}
\end{equation}
and
\begin{equation}
\H_{\omega} = \left\{ \vp\in C^{\infty}(X)~\big|~|Q(\alpha_{\vp})-\theta_{0}|<\frac{\pi}{2} \right\}.
\end{equation}
By \eqref{counterexample eqn 1}, we see that $\theta_{0}\in(0,\frac{\pi}{2})$ and $\tan\theta_{0}=\frac{13}{16}$. For $\vp\in\H_{\omega}$, let $\lambda_{1}$ and $\lambda_{2}$ be the eigenvalues of $\alpha_{\vp}$ with respect to $\omega$. It then follows that for $i=1,2$,
\begin{equation}
0 < \arccot(\lambda_{i}) < \arccot(\lambda_{1})+\arccot(\lambda_{2}) = Q_{\omega}(\alpha_{\vp}) < \theta_{0}+\frac{\pi}{2} < \pi
\end{equation}
and so $\lambda_{i}>-\tan\theta_{0}\cdot\omega$. This implies $\alpha_{\vp}+\tan\theta_{0}\cdot\omega>0$. In particular,
\begin{equation}
0 < \int_{E}\alpha_{\vp}+\tan\theta_{0}\cdot\omega = \int_{X}(\alpha+\tan\theta_{0}\cdot\omega)\wedge[E] = -1+\tan\theta_{0} = -\frac{3}{16},
\end{equation}
which is impossible.
\end{proof}

\section{Non-emptiness of $\mathcal{H}_\omega$ under test family condition}

In \cite{ChuLeeTakahashi2021}, it is proved that the dHYM equation admits a supercritical phase solution if and only if the triple $(X,\omega,\a)$ is stable along some test family. In this section, we find that a similar type of stability also give rise to non-emptiness of the space $\mathcal{H}_{\omega}$ of almost calibrated $(1,1)$ forms.  We start by recalling the concept of test family defined by Chen \cite{Chen2021}.
\begin{defn}
A family of $(1,1)$ forms $\a_t$, $t\in [0,+\infty)$ is said to be a $\theta$-test family (emanating from a real $(1, 1)$ form $\a$) if
\begin{enumerate}\setlength{\itemsep}{1mm}
\item[(a)] $\a_0=\a$;
\item[(b)] $\a_t>\a_s$ if $t>s$;
\item[(c)] there exists $T\geq 0$ such that $\a_T>\cot \left(\frac{\theta}{n} \right)\cdot\omega$ for all $t>T$.
\end{enumerate}
\end{defn}

Now we are ready to state the criteria in terms of test family.
\begin{thm}\label{CS question answer}
Suppose \eqref{theta 0} holds and there exists a $(\theta_0+\frac\pi2)$-test family $\alpha_{t}$ for some $\theta_0\in (0,\frac\pi2)$ such that for any $p$-dimensional subvariety $V\subset X$,
\begin{equation}\label{sub-var-equ}
\int_{V}\Re(\alpha_{t}+\sqrt{-1}\omega)^{p}-\cot\left(\theta_0+\frac{\pi}{2}\right)\cdot\Im(\alpha_{t}+\sqrt{-1}\omega)^{p} > 0.
\end{equation}
then $\H_\omega\neq\emptyset$. Conversely, if $\H_\omega\neq\emptyset$ and $[\alpha]$ has hypercritical phase $\theta_0\in(0,\frac{\pi}{2})$, then \eqref{sub-var-equ} holds for any $p$-dimensional irreducible subvariety $V\subset X$.
\end{thm}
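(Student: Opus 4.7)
The plan is to treat the two directions separately. The forward implication will follow by applying the Nakai-Moishezon criterion of \cite[Corollary 1.4]{ChuLeeTakahashi2021} to the $(\theta_0+\frac{\pi}{2})$-test family to produce a pointwise Lagrangian phase subsolution at phase $\theta_0 + \frac{\pi}{2}$, which is automatically almost calibrated when $\theta_0 \in (0, \frac{\pi}{2})$; the converse will construct the canonical family $\alpha_t := \alpha + t\omega$ explicitly and verify stability by a Cauchy-interlacing calculation.

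For the forward direction, \cite[Corollary 1.4]{ChuLeeTakahashi2021} applied to $\alpha_t$ and \eqref{sub-var-equ} at the supercritical phase $\theta_0 + \frac{\pi}{2}$ produces $\varphi \in C^\infty(X)$ with $Q_\omega(\alpha + \ddb\varphi) < \theta_0 + \frac{\pi}{2}$ pointwise on $X$. Since the Lagrangian phase operator always takes values in $(0, n\pi)$, combining this upper bound with $\theta_0 \in (0, \frac{\pi}{2})$ gives $Q_\omega(\alpha_\varphi) - \theta_0 \in (-\theta_0, \frac{\pi}{2}) \subset (-\frac{\pi}{2}, \frac{\pi}{2})$ pointwise. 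Taking cosine yields $\Re(e^{-\ii\theta_0}(\alpha_\varphi + \ii\omega)^n) > 0$, so $\varphi \in \H_\omega$.

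For the converse, let $\varphi_0 \in \H_\omega$ with hypercritical lifted phase $\theta_0$, so $|Q_\omega(\alpha_{\varphi_0}) - \theta_0| < \frac{\pi}{2}$ and hence $Q_\omega(\alpha_{\varphi_0}) < \theta_0 + \frac{\pi}{2}$ pointwise. Set $\alpha_t := \alpha + t\omega$; properties (a) and (b) of the test family definition are immediate, and (c) holds since $\cot((\theta_0+\frac{\pi}{2})/n)$ is finite. Since $\int_V(\alpha_t + \ii\omega)^p$ depends only on the cohomology class $[\alpha_t + \ii\omega] = [\alpha_{\varphi_0} + (t + \ii)\omega]$, it suffices to verify that the integrand of \eqref{sub-var-equ} is pointwise positive after replacing $\alpha$ by $\alpha_{\varphi_0}$. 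At a smooth point of $V$, diagonalize $\alpha_{\varphi_0}|_V$ with respect to $\omega|_V$ with eigenvalues $\tilde\mu_1 \le \cdots \le \tilde\mu_p$; by Cauchy interlacing for Hermitian restrictions, $\tilde\mu_i \geq \mu_i$ where $\mu_1 \le \cdots \le \mu_n$ are the global eigenvalues of $\alpha_{\varphi_0}$ with respect to $\omega$, and since $\arccot$ is positive and monotone decreasing,
\[
\sum_{i=1}^p \arccot(\tilde\mu_i + t) \leq \sum_{i=1}^p \arccot(\mu_i) \leq Q_\omega(\alpha_{\varphi_0}) < \theta_0 + \frac{\pi}{2}
\]
for all $t \geq 0$. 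Setting $Q := \sum_i \arccot(\tilde\mu_i + t)$, a direct computation expresses the integrand as $r \cdot \sin(\theta_0 + \frac{\pi}{2} - Q)/\sin(\theta_0 + \frac{\pi}{2}) \cdot (\omega|_V)^p$ for some $r > 0$; both sines are positive since $Q \in (0, \theta_0 + \frac{\pi}{2})$ and $\theta_0 + \frac{\pi}{2} \in (\frac{\pi}{2}, \pi)$. Integrating over $V$ then yields \eqref{sub-var-equ}.

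The main obstacle is the forward direction: because the cohomological angle of $[\alpha]$ is $\theta_0$ rather than $\theta_0 + \frac{\pi}{2}$, \cite[Corollary 1.4]{ChuLeeTakahashi2021} cannot yield a genuine dHYM solution at phase $\theta_0 + \frac{\pi}{2}$ in $[\alpha]$, and care is needed to extract from it only the strict subsolution bound $Q_\omega(\alpha_\varphi) < \theta_0 + \frac{\pi}{2}$ via the continuity method underlying that corollary. The converse amounts to linear algebra bookkeeping.
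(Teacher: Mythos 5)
Your proposal is correct and follows essentially the same route as the paper: the forward direction runs the continuity method of Chu--Lee--Takahashi for the $\cot(\theta_0+\frac{\pi}{2})$-twisted equation along the test family and uses that the normalization constant at $t=0$ is strictly positive to get the strict bound $Q_\omega(\alpha_{\varphi_0})<\theta_0+\frac{\pi}{2}$, hence $|Q_\omega(\alpha_{\varphi_0})-\theta_0|<\frac{\pi}{2}$; the converse reduces \eqref{sub-var-equ} cohomologically to a pointwise positivity of $\Re(\alpha_{\varphi_0}+t\omega+\sqrt{-1}\omega)^p-\cot(\theta_0+\frac{\pi}{2})\Im(\cdots)^p$ along $V$. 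The only cosmetic differences are that you prove the pointwise restriction inequality by hand via Cauchy interlacing (where the paper cites \cite[Lemma 2.3]{ChuLeeTakahashi2021}) and absorb the $t$-dependence into that estimate rather than differentiating in $t$.
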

\begin{proof}

Suppose \eqref{sub-var-equ} holds for some $\Theta_{0}$-test family $\a_t$ where $\Theta_{0}=\frac\pi2+\theta_0$.  The non-emptiness of $\mathcal{H}_{\omega}$ follows from the argument of \cite[Theorem 1.3]{ChuLeeTakahashi2021} on the existence of dHYM solution under stability assumption, see also \cite[Section 5]{Chen2021}.  Since the proof is almost identical, we only point out the modifications.  As in  \cite[(7.2)]{ChuLeeTakahashi2021},  we consider the  twisted dHYM equation for $\a_{t,\varphi}=\a_t +\ddb\varphi(t)$:
\begin{equation}\label{twist-dHYM}
\Re(\a_{t,\varphi}+\sqrt{-1}\omega)^n-\cot\Theta_{0}\cdot \Im(\a_{t,\varphi}+\sqrt{-1}\omega)^n=c_t\omega^n
\end{equation}
where $c_t$ is the normalization constant so that their integral over $X$ coincides.  Define also the continuity path:
\begin{equation}
\mathcal{T}=\{ t\in [0,+\infty): \eqref{twist-dHYM}\; \text{admits a solution }\a_{t,\varphi}\in \Gamma_{\omega,\a_t,\Theta_{0},\tilde\Theta_0}\}
\end{equation}
where $\tilde \Theta_0\in (\Theta_{0},\pi)$ is some constant as in the proof of \cite[Theorem 1.3]{ChuLeeTakahashi2021}.  By assumption (ii), $c_t>0$ for all $t\in [0,+\infty)$.  The openness and closeness of $\mathcal{T}$ follows from the same argument. Since $c_0$ is strictly positive in this case (which is the only distinction from \cite{ChuLeeTakahashi2021}),  we obtain a $\varphi_0\in C^\infty(X)$ so that
\begin{equation}
\Re(\a_{\varphi_0}+\sqrt{-1}\omega)^n-\cot\Theta_{0}\cdot \Im(\a_{\varphi_0}+\sqrt{-1}\omega)^n=c_0\omega^n>0.
\end{equation}
In particular,  $Q_\omega(\a_{\varphi_0})\in (0,\theta_0+\frac\pi2)$ and hence $\varphi_0\in \mathcal{H}_{\omega}$.

\medskip

Conversely, if $\mathcal{H}_\omega\neq \emptyset$ and $[\a]$ has hypercritical phase $\theta_0\in(0,\frac\pi2)$.  Then there is $\varphi\in C^\infty(X)$ such that $Q_\omega(\a_\varphi)\in (0,\Theta_{0})$ where $\Theta_{0}=\frac\pi2+\theta_0<\pi$.  By the same argument of \cite[Lemma 2.3]{ChuLeeTakahashi2021} (see also \cite[Lemma 8.2]{CollinsJacobYau2020}), for any $p=1,2,\ldots,n$, we see that
\begin{equation}
\Im\left(e^{-\sqrt{-1}\Theta_{0}}(\alpha_{\vp}+\sqrt{-1}\omega)^{p}\right) < 0.
\end{equation}
We define the test family $\a_t=\a+t \omega$. Since $[\alpha_{\vp}]=[\alpha]=[\alpha_{0}]$,  for any $p$-dimensional subvariety $V\subset X$,
\begin{equation}
\int_{V}\Re(\alpha_{0}+\sqrt{-1}\omega)^{p}-\cot\Theta_{0}\cdot\Im(\alpha_{0}+\sqrt{-1}\omega)^{p} > 0.
\end{equation}
Since
\begin{equation}
\begin{split}
& \frac{d}{dt}\int_{V}\Re(\alpha_{t}+\sqrt{-1}\omega)^{p}-\cot\Theta_{0}\cdot\Im(\alpha_{t}+\sqrt{-1}\omega)^{p} \\
= {} & p\int_{V}\left(\Re(\alpha_{t}+\sqrt{-1}\omega)^{p-1}-\cot\Theta_{0}\cdot\Im(\alpha_{t}+\sqrt{-1}\omega)^{p-1}\right)\wedge \omega> 0.
\end{split}
\end{equation}
The assertion  follows. This completes the proof.
\end{proof}

\begin{rem}\label{CS question answer-2}
As in \cite[Corollary 1.4, Corollary 1.5]{ChuLeeTakahashi2021},  the stability condition  \eqref{sub-var-equ} in terms of test family can also be ensured by requiring: for some \K class $\chi$ in $X$ such that for any $p$-dimensional subvariety $V\subset X$ and $0\leq m\leq p$,
\begin{equation*}
\int_{V}\left\{\Re(\alpha+\sqrt{-1}\omega)^{p-m}-\cot\left(\theta_0+\frac{\pi}{2}\right)\cdot\Im(\alpha+\sqrt{-1}\omega)^{p-m}\right\}\wedge \chi^m> 0.
\end{equation*}
In particular, if $X$ is projective, then the above condition can be weaken as
\begin{equation}
\int_{V}\Re(\alpha+\sqrt{-1}\omega)^{p}-\cot\left(\theta_0+\frac{\pi}{2}\right)\cdot\Im(\alpha+\sqrt{-1}\omega)^{p} > 0.
\end{equation}
\end{rem}


\begin{thebibliography}{10}

\bibitem{Chen2021} Chen, G., {\sl The $J$-equation and the supercritical deformed Hermitian-Yang-Mills equation}, Invent. Math. 225 (2021), no. 2, 529--602.

\bibitem{ChuLeeTakahashi2021} Chu, J. Lee, M.-C., Takahashi, R., {\sl A Nakai-Moishezon type criterion for supercritical deformed Hermitian-Yang-Mills equation}, preprint, arXiv:2105.10725, to appear in J. Differential Geom.

\bibitem{CollinsJacobYau2020} Collins, T. C., Jacob, A., Yau, S.-T., {\sl $(1,1)$ forms with specified Lagrangian phase: a priori estimates and algebraic obstructions}, Camb. J. Math. 8 (2020), no. 2, 407--452.

\bibitem{CollinsShi2020} Collins, T. C., Shi, Y., {\sl Stability and the deformed Hermitian-Yang-Mills equation}, preprint, arXiv:2004.04831.

\bibitem{CollinsXieYau2018} Collins, T. C., Xie, D., Yau, S.-T., {\sl The deformed Hermitian-Yang-Mills equation in geometry and physics}, Geometry and physics. Vol. I, 69--90, Oxford Univ. Press, Oxford, 2018.

\bibitem{CollinsYau2018} Collins, T. C., Yau, S.-T., {\sl Moment maps, nonlinear PDE, and stability in mirror symmetry}, preprint, arXiv:1811.04824.

\bibitem{DatarPingali2021} Datar, V.; Pingali, V. P., {\sl A numerical criterion for generalised Monge-Amp\`ere equations on projective manifolds}, Geom. Funct. Anal. 31 (2021), no. 4, 767--814.

\bibitem{DemaillyPaun2004} Demailly, J.-P., P\u{a}un, M., {\sl Numerical characterization of the K\"ahler cone of a compact K\"ahler manifold}, Ann. of Math. (2) 159 (2004), no. 3, 1247--1274.

\bibitem{JacobSheu2020} Jacob, A., Sheu, N., {\sl The deformed Hermitian-Yang-Mills equation on the blowup of $\mathbb{P}^{n}$}, preprint, arXiv:2009.00651.

\bibitem{LeungYauZaslow2001} Leung, C., Yau, S.-T., Zaslow, E., {\sl From special Lagrangian to Hermitian-Yang-Mills via Fourier-Mukai transform}, Winter School on Mirror Symmetry, Vector Bundles and Lagrangian Submanifolds (Cambridge, MA, 1999), 209--225, AMS/IP Stud. Adv. Math., 23, Amer. Math. Soc., Providence, RI, 2001.

\bibitem{MMMS00} Mari\~no, M., Minasian, R., Moore, G., Strominger, A., {\sl Nonlinear instantons from supersymmetric $p$-branes}, J. High Energy Phys. {\bf 2000}, no. 1, Paper 5, 32 pp.

\bibitem{Song2020}Song, J., {\sl Nakai–Moishezon criterions for complex Hessian equations}, preprint, arXiv:2012.07956.

\bibitem{Solomon2013} Solomon, J. P., {\sl The Calabi homomorphism, Lagrangian paths and special Lagrangians}, Math. Ann. 357 (2013), no. 4, 1389--1424.

\bibitem{SYZ96} Strominger, A. Yau, S.-Y. Zaslow, E., {\sl Mirror symmetry is $T$-duality}, Nuclear Phys. B 479 (1996), no. 1-2, 243--259.

\bibitem{Thomas2001} Thomas, R. P., {\sl Moment maps, monodromy, and mirror manifolds, Symplectic geometry and mirror symmetry (Seoul, 2000)}, 467--498, World Sci. Publ., River Edge, NJ, 2001.

\end{thebibliography}
\end{document}